\DeclareMathOperator*{\argmin}{argmin}
\DeclareMathOperator*{\Tr}{Tr}
\DeclareMathOperator*{\expect}{\mathbb{E}}
\title{The Power of Linear Controllers in LQR Control}
\begin{document}
\date{}

\maketitle


\abstract{
 The Linear Quadratic Regulator (LQR) framework considers the problem of regulating a linear dynamical system perturbed by environmental noise. We compute the policy regret between three distinct control policies: i) the optimal online policy, whose linear structure is given by the Ricatti equations; ii) the optimal offline linear policy, which is the best linear state feedback policy given the noise sequence; and iii) the optimal offline policy, which selects the globally optimal control actions given the noise sequence. We fully characterize the optimal offline policy and show that it has a recursive form in terms of the optimal online policy and future disturbances. We also show that cost of the optimal offline  linear policy converges to the cost of the optimal online policy as the time horizon grows large, and consequently the  optimal offline linear policy incurs linear regret relative to the optimal offline policy, even in the optimistic setting where the noise is drawn i.i.d from a known distribution. Although we focus on the setting where the noise is stochastic, our results also imply new lower bounds on the policy regret achievable when the noise is chosen by an adaptive adversary. }

\section{Introduction}
\noindent In this paper we study control in linear dynamical systems. A system is initialized with state $x_0 \in \mathbb{R}^n$ and evolves according to the equation $$x_{t+1} = Ax_t + Bu_t + w_t,$$ where $A$ and $B$ are known $n \times n$  and $n \times m$ matrices and $w_t \in \mathbb{R}^n$ represents environmental noise. The variable $u_t \in \mathbb{R}^m$ represents a \textit{control action}; we can influence the evolution of the system by picking $u_t$ appropriately. At every step, we pay a state cost $c_t^x(x_t)$ as well as a control cost $ c_t^u(u_t)$, which are both usually assumed to be convex. The question we are interested in is how to pick the control actions so as to minimize our total cost over all rounds $t = 0 \ldots T -1$.

Control theorists have generally considered this problem in two distinct settings. In the $H_2$ (stochastic) setting, we assume that the noise $w = (w_0, \ldots w_{T-1})$ is a zero-mean noise variable with known distribution $\mathcal{D}$, and our goal is to minimize the expected aggregate cost across all rounds, 

\begin{eqnarray}
\min_{u_0, \ldots u_{T-1}} \hspace{1mm} \expect_{w\sim \mathcal{D}} \left[\sum_{t = 0}^{T-1} c_t^x(x_t) + c_t^u(u_t)  \right]. \label{stochastic-obj}
\end{eqnarray}

\noindent In the $H_{\infty}$ (adversarial) setting, the noise is assumed to be arbitrarily generated; the only assumption is that the noise is bounded, i.e. $\|w_t\|_2 \leq B$ for $t = 0 \ldots T - 1$. We seek a policy which minimizes the worst-case aggregate cost over bounded sequences of noise:  

\begin{eqnarray}
\min_{u_0, \ldots u_{T-1}} \hspace{1mm} \sup_{w} \left[\sum_{t = 0}^{T-1} c_t^x(x_t) + c_t^u(u_t) \right]. \label{adv-objective}
\end{eqnarray}

We can hence view $H_{\infty}$ control as a minimax game between the online controller and an adversarial environment whose goal is to make the controller incur as much cost as possible.

In this paper we adopt a different perspective from classical  control,  instead drawing from the online learning community. We consider control through the lens of \textit{regret minimization}. In regret minimization, the goal is design online control policies that approximate the performance that could have been achieved by the best controller (out of some class $\Pi$ of controllers), given access to the sequence of noise increments $w$ in advance. More precisely, we seek control policies that minimize the \textit{policy regret}: 

$$\min_{u_0, \ldots u_{T-1}} \hspace{1mm} \sup_{w} \left[\left(\sum_{t = 0}^{T-1} c_t^x(x_t) + c_t^u(u_t) \right) - \left(\sum_{t = 0}^{T-1} c_t^x(x_t^*) + c_t^u(u_t^*) \right)  \right].$$

\noindent Here $u^*$ is an \textit{optimal offline} sequence of control actions, and $x^*$ is the resulting sequence of states:

$$u_0^*, \ldots u_{T-1}^* = \argmin_{u_0, \ldots u_{T-1} \in \Pi} \sum_{t = 0}^{T-1} c_t^x(x_t) + c_t^u(u_t) \hspace{3mm} \text{where} \hspace{3mm}  x_{t+1} = Ax_t + Bu_t + w_t.$$

\noindent We emphasize that the the optimal offline sequence is defined with respect to both the class of policies $\Pi$ under consideration and the true sequence of realizations $w_0, \ldots w_{T-1}$; the optimal offline sequence is the cost-minimizing sequence of control actions given $w$, out of all sequences in the class $\Pi$.

A key advantage of the regret minimization perspective over classical control is that regret-minimizing controllers are \textit{adaptive}: they always achieve near-optimal performance relative to the best controller in the class $\Pi$, regardless of the how the noise is generated. This is in stark contrast to classical $H_2$ (resp. $H_{\infty}$) control theory, which produces controllers which perform well in the stochastic (resp. adversarial) regime, but whose performance can degrade badly if the noise is adversarial (resp. stochastic). The challenge in designing and analyzing online algorithms through the lens of regret is that regret is a \textit{counterfactual} performance metric: we compare the choices we made with limited information to the choices we could have made with full information, the latter set being potentially very different from the first. The control setting presents particular challenges when compared to classic  problems like Online Convex Optimization (OCO) and Multi-Armed Bandits (MAB), since the costs we incur in distinct rounds are coupled via the state; a poor decision in one round can steer the system into an undesirable trajectory, leading to heavy losses later on.

In this paper, we consider the problem of minimizing policy regret in the stochastic Linear Quadratic Regulator (LQR) setting, where the state costs and control costs are quadratic functions $c_t^x(x_t) = x^{\top}_tQx_t$ and $c_t^u(u_t) = u_t^{\top}Ru_t$ with $Q, R \succeq 0$, and the noise is picked i.i.d from a fixed distribution $\mathcal{D}$. We compare the performance of three distinct control policies:

\begin{enumerate}
    \item The \textbf{optimal online} policy. This is the policy which minimizes the expected aggregate cost (\ref{stochastic-obj}), out of all \textit{causal} policies, e.g. policies such that the control action $u_t$ depends only on the previously observed data $w_0 \ldots w_{t-1}, x_0 \ldots x_{t-1}$ and the current state $x_t$. This policy was originally derived in \cite{kalman1960contributions}, where it was shown that the optimal online policy has a linear structure: in every round, the cost-minimizing causal choice is to pick $u_t = -K_t x_t$ where the matrix $K_t$ can be found by solving the Ricatti equations, a system of linear recurrences in terms of the matrices $A, B, Q, R$.
    
    \item The \textbf{optimal offline linear} policy. This is the cost-minimizing linear state feedback policy $u_t = -K^* x_t$ where $$ K^* = \argmin_{K \in \mathbb{R}^{n \times m}}  \sum_{t = 0}^{T-1} x^{\top}_tQx_t + u^{\top}_tRu_t \hspace{3mm} \text{where} \hspace{3mm}  x_{t+1} = Ax_t + Bu_t + w_t, \hspace{3mm} u_t = -Kx_t. $$
    This policy is the optimal offline choice out of the class $\Pi_{\textsc{linear}}$, the class of linear state feedback controllers, e.g. controllers which always select a control action which is a fixed linear function of the state. We note that several recent papers focus on the problem of designing online learning algorithms which attain sublinear regret against this policy, e.g. \cite{agarwal2019online}, \cite{agarwal2019logarithmic}, \cite{cohen2018online}, \cite{abbasi2014tracking}. We also note that the problem of actually computing the optimal offline state feedback controller $K^*$ given the noise $w$ may be computationally intractable; we discuss this issue more thoroughly in Section \ref{offline-linear-sec}.
    
    \item The (unconstrained) \textbf{optimal offline} policy. This is the offline policy which selects the control actions $$u_0^*, \ldots u_{T-1}^* = \argmin_{u_0, \ldots u_{T-1} \in \mathbb{R}^m} \sum_{t = 0}^{T-1} x^{\top}_tQx_t + u^{\top}_tRu_t \hspace{3mm} \text{where} \hspace{3mm}  x_{t+1} = Ax_t + Bu_t + w_t.$$ Here the control actions are unconstrained; instead of being restricted to a class of policies $\Pi$, the control actions are selected as the global minimizers of the LQR objective, out of all possible control actions. This policy has also attracted recent attention, see for example \cite{goel2019online}, \cite{goel2019beyond}, \cite{li2019online}. While we might more properly refer to this policy as the unconstrained optimal offline policy, we will refer to this policy simply as the optimal offline policy for brevity.

\end{enumerate}

\subsection{Contributions of this paper}

We make three main contributions in this paper.

First, in Section \ref{offline-controller-sec} we derive the structure of the optimal offline policy, and show that it has an interesting recursive form in terms of the optimal online policy and the future noise (Theorem \ref{offline-structure-thm}). Our result parallels various results from the filtering literature, which express the solutions to smoothing problems (e.g. offline estimation) in terms of the corresponding filtering problems (e.g. online estimation) and future noise, see for example \cite{rauch1965maximum} and \cite{kailath2000linear} Sec. 10. We also compute the infinite horizon cost associated with the optimal offline policy (Theorem \ref{offline-cost-thm}). Our results close a gap left open by Kalman, who derived the optimal online policy and its infinite-horizon cost almost sixty years ago in \cite{kalman1960contributions}.

Second, in Section \ref{offline-linear-sec}, we compute the asymptotic cost of the optimal offline linear policy. Much recent work in the online learning community has focused on designing learning algorithms which can compete with this policy, albeit in the more challenging setting where the noise or cost functions is adversarial; we list several such works in Section \ref{related-work-sec}. We study this policy in the stochastic setting and compute its infinite-horizon cost. This result is highly nontrivial, since the offline optimal linear state feedback matrix $K^*$ is the minimizer of a polynomial whose degree scales with the time horizon $T$; since this optimization is highly non-convex, we have little hope of computing $K^*$ exactly. The polynomial is by necessity a random variable, since it depends on the noise realizations $w_0, \ldots w_{T-1}$. Our strategy is to show that in the asymptotic limit as $T$ tends to infinity the optimal offline linear cost converges almost surely to to the cost of the optimal online policy. To the best of our knowledge our proof technique is novel; we are not aware of any other work in the control or online learning community which computes an offline cost via a reduction to the online setting. We also prove a concentration inequality showing that the cost of the online optimal policy is tightly concentrated around its mean, a result which may be on independent interest to control theorists (Lemma \ref{concentration-lemma}).

Third, in Section \ref{policy-regret-sec} we apply our results to compute the pairwise policy regrets between all three policies. Our policy regret bound between the optimal online policy and the optimal offline policy is significant for two reasons. First, it is the first LQR policy regret bound we are aware of that compares an online policy to the (unconstrained) optimal offline policy, unlike much recent work which instead measures regret against the weaker optimal offline \textit{linear} policy. Second, while our bound is for the stochastic setting, it implies a lower bound on the best policy regret achievable in the adversarial setting; intuitively, giving an adversary control of the noise can only increase the regret incurred by the online learner. We also compute the policy regret between the optimal offline linear policy and the optimal offline policy, showing that it grows linearly in time. This suggests that the class of linear controllers is too restrictive to capture all of the performance offered by the offline optimal controller, and motivates performance metrics which are specifically designed to track the optimal offline cost, e.g. competitive ratio as considered in \cite{goel2019online}, \cite{goel2019beyond}, \cite{goel2017thinking}.

We emphasize two key strengths of our results. First, all of the theorems we prove hold in complete generality, and apply to any stabilizable linear dynamical system perturbed by i.i.d bounded noise; we impose no restrictive constraints on the underlying dynamical system or noise distribution. Second, all of the control costs we compute, as well as all of the policy regret bounds we derive, are exact: instead of merely bounding the costs and regrets of the various algorithms we consider, we give their exact numerical value.

\section{Related work} \label{related-work-sec}

\subsection{Optimal control}
 In the optimal control paradigm, we assume distributional knowledge of the noise $w$ and seek controllers which exactly minimize the expected LQR costs under this distribution; this is the setting we consider in this paper. We refer the reader to \cite{stengel1994optimal} for a survey of the vast optimal control literature.
We will often make use of Kalman's characterization of the optimal online LQR policy which he established in \cite{kalman1960contributions}:

\begin{theorem}(Kalman) \label{online-thm}

The online (i.e. strictly causal) policy which minimizes the infinite-horizon cost $$ \lim_{T \rightarrow \infty} \expect_{w \sim \mathcal{D}} \left[ \frac{1}{T} \sum_{t=0}^{T-1} x_t^{\top}Qx_t + u_t^{\top}Ru_t  \hspace{3mm} \text{where} \hspace{3mm}  x_{t+1} = (A-BK)x_t + w_t \right]$$ has the following linear structure: 
in every round, $u_t = -K x_t$, where $ K = (R + B^{\top}PB)^{-1}B^{\top}PA$ and $P$ is the unique p.s.d. solution of the algebraic Ricatti equation 

\begin{eqnarray} \label{are-def}
P = Q + A^{\top}PA - A^{\top}PB(R + B^{\top}PB)^{-1}B^{\top}PA.
\end{eqnarray}
Furthermore, the infinite-horizon cost under this policy is $\Tr(PW)$. 
\end{theorem}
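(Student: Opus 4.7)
The natural approach is dynamic programming on the finite-horizon problem followed by a limiting argument. Let $W = \expect[w_t w_t^\top]$ denote the noise covariance. Define the cost-to-go $V_t(x)$ as the minimum expected cost accrued from time $t$ to $T$ starting from state $x_t = x$, with terminal condition $V_T(x) = 0$. The Bellman equation reads
\begin{equation*}
V_t(x) = \min_{u \in \mathbb{R}^m} \Bigl\{ x^\top Q x + u^\top R u + \expect_{w}\bigl[ V_{t+1}(Ax + Bu + w) \bigr] \Bigr\}.
\end{equation*}
I would then guess the ansatz $V_t(x) = x^\top P_t x + c_t$ and verify it by backward induction. Under this ansatz the inner minimization becomes a strictly convex quadratic in $u$ (since $R + B^\top P_{t+1} B \succ 0$ when $R \succeq 0$ and $P_{t+1} \succeq 0$ with $R + B^\top P_{t+1} B$ invertible, which follows once $R \succ 0$ or from stabilizability), and setting the gradient to zero yields the linear feedback $u_t = -K_t x_t$ with $K_t = (R + B^\top P_{t+1} B)^{-1} B^\top P_{t+1} A$.

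Substituting this optimal $u$ back into the Bellman recursion produces the Riccati recursion
\begin{equation*}
P_t = Q + A^\top P_{t+1} A - A^\top P_{t+1} B (R + B^\top P_{t+1} B)^{-1} B^\top P_{t+1} A,
\end{equation*}
together with the scalar recursion $c_t = c_{t+1} + \Tr(P_{t+1} W)$, using that the cross term vanishes because $w$ is zero-mean and that $\expect[w^\top P_{t+1} w] = \Tr(P_{t+1} W)$. This establishes the linear structure of the optimal policy at every finite horizon.

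Next I would pass to the infinite-horizon limit. Running the recursion backward from $P_T = 0$ and using stabilizability of $(A,B)$, one invokes the standard convergence theorem for the discrete-time Riccati recursion: $P_t \to P$ as $T - t \to \infty$, where $P$ is the unique positive semidefinite solution of the algebraic Riccati equation \eqref{are-def}, and the associated closed-loop matrix $A - BK$ is stable. The time-averaged cost is then
\begin{equation*}
\lim_{T \to \infty} \frac{1}{T} \sum_{t=0}^{T-1} \Tr(P_{t+1} W) \;=\; \Tr(PW),
\end{equation*}
by Cesàro averaging, which gives the stated infinite-horizon cost. Optimality among \emph{all} causal (not necessarily linear) policies is automatic from the dynamic programming principle, since the Bellman minimization is over all $u \in \mathbb{R}^m$.

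\textbf{Main obstacle.} The routine part is the backward induction; the delicate step is the infinite-horizon limit, specifically proving that $P_t$ converges to a particular fixed point of the Riccati map and that this fixed point is unique among positive semidefinite solutions. This is where stabilizability of $(A,B)$ (and, for uniqueness, detectability of $(A, Q^{1/2})$) enters, and where one must argue that any other candidate solution would either fail to be positive semidefinite or would produce an unstable closed loop incompatible with finite long-run average cost. I would appeal to the classical Riccati convergence results rather than reprove them from scratch.
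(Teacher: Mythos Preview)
Your proof plan is correct and is exactly the classical dynamic-programming derivation of the LQR controller. However, there is nothing to compare against: in the paper this theorem is stated in the Related Work section as Kalman's result and is simply cited to \cite{kalman1960contributions} without proof. The paper treats it as background, so any self-contained derivation you supply is additional content rather than a reproduction of something the paper does.
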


\subsection{Online learning and control}
There has been much recent interest in control from the online learning community, much of it centered around  designing algorithms for LQR control with adversarial noise or costs that attains sublinear regret against the optimal offline linear policy, e.g. \cite{abbasi2011regret}, \cite{abbasi2014tracking},  \cite{cohen2018online}, \cite{agarwal2019online}, \cite{agarwal2019logarithmic}; these papers partially motivate our study of the optimal offline linear policy in the stochastic setting.
Many of these papers use classic techniques from the Online Convex Optimization (OCO) and bandits literature, such as Optimism in the Face of Uncertainty (OFU) and variations of Online Gradient Descent (see \cite{hazan2016introduction} for a survey). While these techniques are well-suited for the adversarial setting, we instead draw from the optimal control literature to understand the performance achievable in the stochastic setting.

\subsection{Competitive analysis}
A central focus of this paper is bounding the cost of an online control policy against the cost of the optimal offline  policy. In the online algorithms community, proving such bounds are the central aim of \textit{competitive analysis} (see \cite{borodin2005online} for a survey). We note that a series of recent papers also consider control-related problems from the perspective of competitive analysis, e.g. \cite{goel2017thinking}, \cite{goel2019online}, \cite{goel2019beyond}, \cite{li2018online}, \cite{li2019online}. Compared to our work, these papers usually give the online controller more power; for example, all of these papers assume that the online policy has predictions about the future noise. Furthermore, many of these papers assume that the control matrix $B$ is invertible, which is a very strong special case of controllablity; in this paper we only make the much weaker assumption that the system is stabilizable.

\section{Model and preliminaries}
\subsection{Control setting}
We formally define the control setting we study in this paper as follows. A linear system evolves according to the following dynamics equation: $$x_{t+1} = Ax_t + Bu_t + w_t, $$ where $x_t \in \mathbb{R}^n$ is the state variable, $u_t \in \mathbb{R}^m$ is a control variable, and $w_t \in \mathbb{R}^n$ is a noise variable. We assume without loss of generality that the initial point $x_0$ is zero. The matrices $A \in \mathbb{R}^{n \times n}$ and $B \in \mathbb{R}^{n \times m}$ are arbitrary, except that we assume the pair $(A, B)$ is \textit{stabilizable}, i.e. there exist matrices $K$ such that the $\rho(A - BK) < 1$; this condition is known as stability. A consequence of stability is that $A - BK$ is similar to a contraction matrix $L$, i.e.
$A - BK = MLM^{-1}$ where $ \|L\| \leq 1 - \gamma $ for some $\gamma \in (0, 1]$. We assume that the noise is stochastic and drawn i.i.d from a fixed distribution $\mathcal{D}$ with zero mean and bounded support, i.e. $\|w_t\|_2 \leq B$ for all $t \in 0 \ldots T-1$. 

We are interested in designing policies which minimize the expected LQR cost: 

\begin{eqnarray*}
\expect_{w \sim \mathcal{D}} \left[ \sum_{t=0}^{T-1} x_t^{\top}Qx_t + u_t^{\top}Ru_t \right],
\end{eqnarray*}

\noindent where $Q \succeq 0$ and $R \succ 0$. In this paper we are often in interested in the asymptotic behavior of the system, in the limit $T \rightarrow \infty$. In this setting the appropriate metric is the infinite-horizon LQR cost:

\begin{eqnarray*}
\lim_{T \rightarrow \infty} \expect_{w \sim \mathcal{D}} \left[ \frac{1}{T} \sum_{t=0}^{T-1} x_t^{\top}Qx_t + u_t^{\top}Ru_t \right].
\end{eqnarray*}

\noindent Notice that in this definition the cost is time-averaged, to prevent the cost from going to infinity.
We define the \textit{policy regret} between two control policies as the expected difference of their LQR costs; in the infinite-horizon setting we naturally define policy regret as the difference in their (time-averaged) infinite-horizon costs. If two policies have infinite-horizon policy regret converging to a constant $c_0 > 0$, then the finite-horizon policy regret between the two policies grows linearly at time at rate $c_0T$ (up to lower order terms).

We consider two distinct types of control policies: online policies (usually called strictly causal policies in the control literature), which in every round select a control action $u_t$ which depends on $x_0 \ldots x_t$ and $w_0 \ldots w_{t-1}$, and offline (non-casual) policies, which are free to pick actions which depend on the full sequence of states $x_0 \ldots x_{T-1}$ and the full sequence of noise $w = (w_0, \ldots w_{T-1})$. We note that in our online results we assume the controller picks the action $u_t$ after observing the state $x_t$ but before observing the noise $w_t$; this is more challenging than the setting considered in several recent papers, e.g. \cite{goel2019online}, \cite{goel2019beyond}, \cite{li2018online}, \cite{li2019online}, where the online policy observes $w_t$ before selecting $u_t$.

\subsection{Notation and terminology}
We often use $\|x\|^2_A$ as a shorthand for $x^{\top}Ax$. We let $\rho(A)$, $\kappa(A)$, and $\sigma_{\text{max}}(A)$ denote the spectral radius of a matrix $A$, its condition number, and its largest singular value, respectively.  We use the lowercase letters $x$ and $u$ to represent state and control variables, respectively, and reserve the capital letters $A, B, K, P, S$ to denote matrices associated with linear dynamical systems and their associated controllers; occasionally we use other capital letters to denote constants that appear in our bounds. We often refer to linear state feedback policies as linear policies. We use the terms ``control policy" and ``controller" interchangeably. In the special case where the control policy is a linear policy $u_t = -K x_t$, we may, via a slight abuse of terminology, refer to $K$ as the controller.


\section{The optimal offline policy} \label{offline-controller-sec}
In this section we derive the structure of the optimal offline controller, and show that it is intimately related to the structure of the optimal online controller derived by Kalman almost sixty years ago. Given a sequence $w = (w_0, \ldots w_{T-1})$, the optimal offline control actions $(u_0^*, \ldots u_{T-1}^*)$ are the ones which minimize the LQR objective

\begin{equation} \label{offline-prob}
     \left[ \left(\sum_{t=0}^{T-1} x_t^{\top}Qx_t + u_t^{\top}Ru_t\right) + x_T^{\top}Q_fx_T \hspace{3mm} \text{where} \hspace{3mm}  x_{t+1} =  Ax_t + Bu_t + w_t \right],
\end{equation}
where $Q_f$ represents a terminal state cost. We emphasize that the optimal offline control actions are defined with respect to the actual realizations $w_0, \ldots w_{T-1}$, instead of merely the noise distribution $\mathcal{D}$; the optimal offline control actions are the optimal actions in hindsight, with full knowledge of $w$.

\subsection{The structure of the optimal offline policy}

We use dynamic programming to recursively compute the optimal control actions, starting from the last time step and moving backwards in time; this approach mirrors Kalman's classic derivation of the optimal online policy in \cite{kalman1960contributions}.  For any fixed sequence of noise increments $w = (w_0, \ldots w_{T-1})$, define the ``offline cost-to-go" function $$V_t^w(x) = \min_u  [x^{\top}Qx + u^{\top}Ru + V_{t+1}^w(Ax + Bu + w_t)]$$ for $t = 1 .. \ldots T-1$, with $V_{T}(x) = x^{\top}Q_fx$. This function measures the aggregate cost over the future time horizon starting at the state $x$ at time $t$, under the assumption that in each time step, the offline controller picks the control action which minimizes the future cost given the current state and the realizations $w_t \ldots w_{T-1}$. 

We will show that that $V_t^w(x)$ can be written as $x^{\top}P_t x + v^{\top}_t x_t +  q_t$ for all $t \in [1\ldots T]$, where $P_t$ is defined as in the online policy. The claim is clearly true for $t = T$, since we can take $(P_{T}, v_T, q_T)  = (Q_f, 0, 0)$. Proceeding by backwards induction, suppose $V_{t+1}^w(x) = x^{\top}P_{t+1} x + v^{\top}_{t+1}x + q_{t+1}$ for some $ v_{T}, q_{T}$. We have 

$$
V_{t}^w(x) = \min_u  [x^{\top}Qx + u^{\top}Ru + (Ax + Bu + w_t)^{\top}P_{t+1}(Ax + Bu + w_t) + v_{t+1}^{\top}(Ax + Bu + w_t) + q_{t+1}].
$$
\noindent We can rewrite this more compactly in matrix form: 

\begin{equation*}
V_t^w(x) = \min_u 
\begin{pmatrix}
u^{\top} \\
x^{\top} \\
w_t^{\top} \\
v_{t+1}^{\top}
\end{pmatrix}
\begin{pmatrix}
R + B^{\top}P_{t+1}B & B^{\top}P_{t+1}A & B^{\top}P_{t+1} & \frac{1}{2}B^{\top} \\
A^{\top}P_{T}B & Q + A^{\top}P_{t+1}A & A^{\top}P_{t+1} & \frac{1}{2}A^{\top}\\
P_{T}B & P_{T}A & P_{T} & \frac{1}{2}I \\
\frac{1}{2}B & \frac{1}{2}A & \frac{1}{2}I & 0
\end{pmatrix}
\begin{pmatrix}
u \\
x \\
w_t\\
v_{t+1}
\end{pmatrix}
+ q_{t+1}
\end{equation*}

\noindent Using the Schur complement, we can make two observations. Firstly, the optimal offline control action in each round has the form 

\begin{eqnarray*}
u_t^* &=& -(R+B^{\top}PB)^{-1}B^{\top} \left(P_{t+1}Ax_t + P_{t+1}w_t + \frac{1}{2}v_{t+1} \right) \\
&=& - K_tx_t  -(R+B^{\top}P_{t+1}B)^{-1}B^{\top} \left(P_{t+1}w_t + \frac{1}{2}v_{t+1} \right),
\end{eqnarray*}
where $K_t$ is the optimal online controller originally computed by Kalman. In other words, the optimal offline control action at time $t$ is the sum of the optimal online control action and a term which depends only on the future disturbances $w_t \ldots w_{T-1}$.

\noindent Secondly, we can use the Schur complement to compute $V_t^w(x)$ explicitly:

\begin{eqnarray*}
V_t^w(x) &=& 
\begin{pmatrix}
x^{\top} \\
w_t^{\top} \\
v_{t+1}^{\top}
\end{pmatrix}
\begin{pmatrix}

 Q + A^{\top}P_{t+1}A & A^{\top}P_{t+1} & \frac{1}{2}A^{\top}\\
 P_{t+1}A & P_{t+1} & \frac{1}{2}I \\
 \frac{1}{2}A & \frac{1}{2}I & 0
\end{pmatrix}
\begin{pmatrix}
x \\
w_t\\
v_{t+1}
\end{pmatrix} \\
&& -
\begin{pmatrix}
x^{\top}A^{\top}P_{t+1} \\
w_t^{\top}P_{t+1} \\
\frac{1}{2} v_{t+1}^{\top}
\end{pmatrix}
B(R + B^{\top}P_{t+1}B)^{-1}B^{\top}
\begin{pmatrix}
x^{\top}A^{\top}P_{t+1} \\
w_t^{\top}P_{t+1} \\
\frac{1}{2}v_{t+1}^{\top}
\end{pmatrix}^{\top} + q_{t+1}
\end{eqnarray*}

\noindent Collecting terms, we see that $V_t^w(x) = x^{\top}P_t x + v_t^{\top}x + q_t$ where $P_t$ is the solution of the discrete time Ricatti equation obtained by Kalman, and $v_t$ and $q_t$ satisfy the recurrences


\begin{eqnarray}
v_t &=& 2A^{\top}S_t w_t + A^{\top}S_tP_{t+1}^{-1}v_{t+1} \label{v-recur} \\
q_t &=& w_t^{\top}S_{t+1}w_t + v_{t+1}^{\top}P_{t+1}^{-1}S_t w_t + q_{t+1} - \frac{1}{4}v_{t+1}^{\top} B (R + B^{\top} PB)^{-1} B^{\top} v_{t+1}  \label{q-recur},
\end{eqnarray}

\noindent  where we define 

\begin{eqnarray}
S_t = P_{t+1} - P_{t+1}B(R+ B^{\top}P_{t+1}B)^{-1}B^{\top}P_{t+1}. \label{s-def}
\end{eqnarray}

\noindent We have proven:

\begin{theorem} \label{offline-structure-thm}
Let $u_0^* \ldots u_{T-1}^*$ be the optimal offline control actions as defined in \ref{offline-prob}. These control actions have the following structure: for each $t \in [0, \ldots T-1]$, we have $$u_t^* = - K_tx_t  -(R+B^{\top}P_{t+1}B)^{-1}B^{\top} \left(P_{t+1}w_t + \frac{1}{2}v_{t+1} \right),$$ where $$K_t = (R+B^{\top}P_{t+1}B)^{-1}B^{\top}A,$$ $P_t$ is the solution of the discrete-time Ricatti recurrence, and $v_t$ satisfies the recurrence (\ref{v-recur}).
\end{theorem}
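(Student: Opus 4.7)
The plan is to establish the claim by backward dynamic programming on the offline cost-to-go function $V_t^w(x) = \min_u [x^{\top}Qx + u^{\top}Ru + V_{t+1}^w(Ax + Bu + w_t)]$ with terminal condition $V_T^w(x) = x^{\top}Q_f x$. Since the offline controller sees the entire noise sequence in advance, $V_t^w$ is a well-defined deterministic function of $x$ once $w_t, \ldots, w_{T-1}$ are fixed, and the Bellman recursion above is exact. The structural claim I want to propagate inductively is that $V_t^w(x)$ is an affine-quadratic function of $x$ of the form $x^{\top}P_t x + v_t^{\top} x + q_t$, in which $P_t$ is the \emph{same} matrix that appears in Kalman's online Riccati recurrence, while $v_t$ and $q_t$ depend only on the future noise $w_t, \ldots, w_{T-1}$.

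The base case is immediate by taking $(P_T, v_T, q_T) = (Q_f, 0, 0)$. For the inductive step, I substitute the hypothesis $V_{t+1}^w(y) = y^{\top}P_{t+1} y + v_{t+1}^{\top} y + q_{t+1}$ with $y = Ax + Bu + w_t$, obtaining a quadratic-in-$u$ expression inside the minimization with Hessian $R + B^{\top}P_{t+1}B \succ 0$. The first-order optimality condition gives a linear equation for $u$ whose unique solution I can read off directly: the $x$-dependent piece yields the Kalman gain $K_t = (R+B^{\top}P_{t+1}B)^{-1}B^{\top}P_{t+1}A$, while the remaining $w_t$ and $v_{t+1}$ terms contribute exactly the additive correction $-(R+B^{\top}P_{t+1}B)^{-1}B^{\top}(P_{t+1}w_t + \tfrac{1}{2}v_{t+1})$ claimed by the theorem. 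This identifies $u_t^*$ and in particular shows that the offline optimal action decomposes as the online Kalman action plus a noise-dependent lookahead correction.

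It then remains to verify that the value function after plugging in $u_t^*$ still has the required affine-quadratic form. This is cleanest to organize by writing the integrand as a block quadratic form in $(u, x, w_t, v_{t+1})$ and applying the Schur complement to eliminate the $u$-block, as in the excerpt above. Collecting the resulting terms by their dependence on $x$ shows that the quadratic-in-$x$ coefficient satisfies precisely the Kalman Riccati recurrence (so $P_t$ coincides with the online Riccati solution), the linear-in-$x$ coefficient satisfies the recurrence (\ref{v-recur}) for $v_t$, and the constant picks up the recurrence (\ref{q-recur}) for $q_t$. The main obstacle is purely bookkeeping: correctly tracking cross terms between $w_t$ and $v_{t+1}$ and re-expressing them through the definition (\ref{s-def}) of $S_t$, so that the coefficients aggregate into the stated recurrences rather than a more complicated expression. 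Once this algebra is carried through, the inductive step is complete and the theorem follows since $u_t^*$ was identified along the way.
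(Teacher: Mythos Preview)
Your proposal is correct and follows essentially the same route as the paper: backward dynamic programming on the offline cost-to-go $V_t^w(x)$, the inductive ansatz $V_t^w(x)=x^{\top}P_t x+v_t^{\top}x+q_t$ with base case $(Q_f,0,0)$, and elimination of $u$ via the Schur complement of the block quadratic form in $(u,x,w_t,v_{t+1})$ to read off both $u_t^*$ and the recurrences for $P_t,v_t,q_t$. The only cosmetic difference is that you also mention the first-order optimality condition as a direct way to identify $u_t^*$ before invoking the Schur complement, which is equivalent; note also that your formula $K_t=(R+B^{\top}P_{t+1}B)^{-1}B^{\top}P_{t+1}A$ is the correct one and matches the paper's derivation (the theorem statement in the paper omits the $P_{t+1}$ factor by a typo).
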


\noindent We note that this theorem parallels various results from the filtering literature, which express the solutions to smoothing problems (e.g. offline estimation) in terms of the corresponding filtering problems (e.g. online estimation) and future noise, see for example \cite{rauch1965maximum} and \cite{kailath2000linear} Sec. 10.

\subsection{The cost of the optimal offline policy}
Let us now turn to the problem of computing the infinite-horizon cost of the optimal offline policy we derived in Theorem \ref{offline-structure-thm}.  We prove:

\begin{theorem} \label{offline-cost-thm}
The infinite-horizon cost of the optimal offline policy described in Theorem \ref{offline-structure-thm} is
$$ \Tr(WS) -  \sum_{i=0}^{\infty} \Tr \left(W SA (A^{\top} - K^{\top}B^{\top})^i B (R + B^{\top} PB)^{-1} B^{\top} (A - BK)^i A^{\top}S \right)$$
where $W$ is the covariance of the noise, $P$ is the solution to the algebraic Ricatti Equation (\ref{are-def}), $S = P - PB(R + B^{\top}PB)^{-1}B^{\top}P$, and $K$ represents the optimal online policy in Theorem (\ref{online-thm}).
\end{theorem}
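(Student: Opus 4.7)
The plan is to compute $\expect[q_0]$ and then divide by $T$ and take $T \to \infty$. Since $V_0^w(x_0) = x_0^{\top} P_0 x_0 + v_0^{\top} x_0 + q_0$ reduces to $q_0$ under the assumption $x_0 = 0$, the total offline cost is exactly $q_0$. Stabilizability of $(A,B)$ implies that $P_t \to P$, $S_t \to S$, and $K_t \to K$ at a geometric rate, so I can replace the time-varying matrices in (\ref{v-recur}) and (\ref{q-recur}) by their steady-state limits; the resulting transient error contributes only $o(T)$ under time averaging.

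First, I would simplify the $v$-recurrence using the identity $A^{\top} S P^{-1} = A^{\top} - K^{\top} B^{\top} = (A-BK)^{\top}$, which is an immediate consequence of the definitions $K = (R+B^{\top}PB)^{-1}B^{\top}PA$ and $S = P - PB(R+B^{\top}PB)^{-1}B^{\top}P$. The steady-state version of (\ref{v-recur}) then reads $v_t = 2 A^{\top} S w_t + (A-BK)^{\top} v_{t+1}$, and unrolling backwards from $v_T = 0$ yields the closed form
\[
v_{t+1} = 2 \sum_{i=0}^{T-2-t} \bigl((A-BK)^{\top}\bigr)^{i} A^{\top} S \, w_{t+1+i}.
\]

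Next, I would telescope (\ref{q-recur}) from $q_T = 0$. The term $w_t^{\top} S w_t$ contributes $\Tr(SW)$ per summand, giving the $\Tr(WS)$ in the statement. The cross term $v_{t+1}^{\top} P^{-1} S w_t$ vanishes in expectation because $v_{t+1}$ is a linear function of $w_{t+1}, \ldots, w_{T-1}$, which are independent of $w_t$ and have mean zero. For the quadratic term, substituting the closed form for $v_{t+1}$ expands $\expect\bigl[v_{t+1}^{\top} B(R+B^{\top}PB)^{-1} B^{\top} v_{t+1}\bigr]$ into a double sum over $i,j$ whose off-diagonal entries vanish by independence; the diagonal survives as the desired sum of trace expressions after an application of the cyclic property of the trace. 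Summing over $t$, each index $i$ appears $T-1-i$ times, so dividing by $T$ and letting $T \to \infty$ produces precisely the infinite series in the theorem.

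The main obstacle is the rigorous handling of the infinite-horizon limit. One has to verify (i) that the transients from replacing the time-varying $P_t, S_t, K_t$ by their steady-state limits contribute only $o(T)$ to $\expect[q_0]$, which follows from the geometric convergence of the Ricatti iteration together with Lipschitz bounds on the matrix expressions involved, and (ii) that the interchange of the $T \to \infty$ limit with the sum over $i$ is legitimate, which follows from $\rho(A-BK) < 1$ and the corresponding geometric decay $\|(A-BK)^i\| \leq C(1-\gamma)^i$, allowing a dominated convergence argument.
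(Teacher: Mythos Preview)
Your proposal is correct and follows essentially the same route as the paper: reduce to $\expect[q_0]$, use the identity $A^{\top}SP^{-1}=(A-BK)^{\top}$, kill the cross term by independence, and evaluate the quadratic term in $v_{t+1}$. The only cosmetic difference is that the paper packages the quadratic computation via the covariance $V_t=\expect[v_tv_t^{\top}]$ and its Lyapunov recursion $V_t=4A^{\top}S_tWS_tA+A^{\top}S_tP_{t+1}^{-1}V_{t+1}P_{t+1}^{-1}S_tA$, whereas you unroll $v_{t+1}$ explicitly and sum the diagonal directly; these are the same calculation.
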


\begin{proof}
Using the notation we introduced in the proof of Theorem \ref{offline-structure-thm}, the infinite-horizon cost of the optimal offline policy is

$$\lim_{T \rightarrow \infty} \expect_{w \sim \mathcal{D}} \left[\frac{1}{T}V_0^w(x_0) \right] = \lim_{T \rightarrow \infty} \frac{1}{T} \expect_{w \sim \mathcal{D}} \left[x_0^{\top}P_0x_0 + v_0^{\top}x_0 + q_0 \right].$$ 

\noindent Recall that we assumed $x_0$. Using the recursion for $v_t$ given by (\ref{v-recur}) and the fact that $v_{T} = 0$ and $\expect[w_t] = 0$ for $t = 1 \ldots T-1$, we easily see that $\expect[v_t] = 0$ for all $t \in [0 \ldots T - 1]$. In particular, $\expect_w[v_0]=0$, so all that remains is to calculate $\expect[q_0]$. Using the recurrence (\ref{q-recur}) we derived for $q_t$, we see that

$$\expect[q_t] = \Tr(WS_t) - \frac{1}{4}\Tr(B (R + B^{\top} PB)^{-1} B^{\top} V_{t+1}) + \expect_w[q_{t+1}] $$

\noindent where we defined $V_t = \expect[v_t v_t^{\top}]$. Here we used the fact that $\expect[v_{t+1}^{\top}(P_{t+1}^{-1}S_t)w_t] = 0$, since $v_{t+1}$ and $w_t$ are independent and $\expect[w_t] = 0$. We have 

\begin{eqnarray*}
V_t &=&  \expect[v_t v_t^{\top}] \\
&=& \expect[(2A^{\top}S_t w_t + A^{\top}S_t P_{t+1}^{-1}v_{t+1})( 2A^{\top}S_t w_t + A^{\top}S_t P_{t+1}^{-1}v_{t+1})^{\top}] \\
&=& 4A^{\top}S_t W S_t A + A^{\top}S_t P_{t+1}^{-1}V_{t+1} P_{t+1}^{-1}S_t A,
\end{eqnarray*}

\noindent where we applied (\ref{v-recur}) and observed that the cross-terms vanish by independence of  $v_{t+1}$ and $w_t$ and the fact that $\expect[w_t] = 0$. 

Let us now consider the limiting behavior of $V_t$ as $t \rightarrow \infty$. It is well known that $P_t$ converges to $P$, the solution of the algebraic Ricatti equation (\ref{are-def}), as $t \rightarrow \infty$ (c.f. \cite{kailath2000linear}). Applying the definition of $S_t$ (\ref{s-def}), we see that $S_t$ converges to $$S = P - PB(R+ B^{\top}PB)^{-1}B^{\top}P.$$ To determine the convergence of $V_t$, it suffices to show that $\rho(A^{\top}S P^{-1}) < 1$ (see \cite{kailath2000linear}, Lemma D.1.2), in which case $V_t$ will converge to the solution of the equation 

\begin{eqnarray}
V = 4A^{\top}SWSA + A^{\top}SP^{-1}V P^{-1}SA \label{v-eq}.
\end{eqnarray}

\noindent Notice that $$ A^{\top}S P^{-1} = A^{\top} - A^{\top}PB(R + B^{\top}PB)^{-1}B^{\top} = (A-BK)^{\top},$$
where $K$ is represents the linear controller which minimizes the infinite-horizon cost. The Kalman gain $A - BK$ always has spectral radius strictly less than one, establishing the convergence of $V_t$ to the solution of (\ref{v-eq}), namely

$$V = 4\sum_{i=0}^{\infty} (A^{\top} - K^{\top}B^{\top})^i(A^{\top}SWSA)(A - BK)^i. $$

\noindent We see that the infinite-horizon optimal offline cost is 

\begin{eqnarray*}
\lim_{T \rightarrow \infty} \frac{1}{T}\expect_{w \sim \mathcal{D}}[V_0^w(x_0)] &=&  \lim_{T \rightarrow \infty} \frac{1}{T} \expect_{w \sim \mathcal{D}} [q_0]\\
&=& \lim_{T \rightarrow \infty} \frac{1}{T} \sum_{t=0}^{T-1} \left( \Tr(WS_t) - \frac{1}{4}\Tr(B (R + B^{\top} PB)^{-1} B^{\top}V_{t+1}) \right) \\
&=& \Tr(WS)  - \frac{1}{4}\Tr(B (R + B^{\top} PB)^{-1} B^{\top}V) \\
&=&  \Tr(WS) -  \sum_{i=0}^{\infty} \Tr \left(W SA (A^{\top} - K^{\top}B^{\top})^i B (R + B^{\top} PB)^{-1} B^{\top} (A - BK)^i A^{\top}S \right),
\end{eqnarray*}
where we plugged in the value of $V$ we obtained, and used the linearity and cyclic property of the trace.

\end{proof}

\section{The optimal offline linear policy} \label{offline-linear-sec}

In this section we compute the infinite-horizon cost of the optimal offline linear policy. Before we turn to this result, we note that is somewhat surprising that this cost can be computed at all. Recall that the evolution equation is $$x_{t+1} = Ax_t + Bu_t + w_t, $$ and suppose that the control policy is a linear state feedback policy, $u_t = -Kx_t$ for some $K \in \mathbb{R}^{n \times n}$. Iterating the dynamics backwards in time, we see that 
\begin{eqnarray}
x_{t} = \sum_{s=0}^{t-1} (A - BK)^{t - 1 - s} w_s. \label{state-eq}
\end{eqnarray}

\noindent Notice that $x_t$ depends on $K$ in a highly non-convex way; $x_t$ is a polynomial function of $K$ whose degree scales with $t$. It follows that the control variables and the LQR objective are also non-convex in $K$; in general, given the realizations $w_0, \ldots w_{T-1}$, it is not clear how to compute the offline optimal linear policy $K^*$, since this involves minimizing a polynomial of degree $T-1$. Nevertheless, we compute the infinite-horizon cost of this policy. Our strategy is to show that as $T$ grows large, the cost of optimal offline linear converges to the cost of the optimal online policy. Intuitively, each realization $w_t$ makes little difference in the asymptotic limit, so the offline cost converges to its expectation, which is the cost of the online policy. Superficially, our result resembles the Law of Large Numbers, but we emphasize a key difference: in LLN-type results the summands are usually i.i.d, but in the control setting the costs may be highly correlated across time, since the costs all depend on the state.
 We prove:

\begin{theorem} \label{offline-linear-cost-thm}
Let $(A, B)$ be any stabilizable pair of matrices. Consider the linear dynamical system given by $$x_{t+1} = Ax_t + Bu_t + w_t, $$ where the noise $w$ is drawn i.i.d from a fixed distribution $\mathcal{D}$ with zero mean and bounded support. In this dynamical system, the cost of the optimal offline linear policy converges almost surely to the cost of the optimal online policy as $T \rightarrow \infty$:

$$  \min_{K \in \mathbb{R}^{n \times m}} \frac{1}{T} \sum_{t=0}^{T-1} x_t^{\top}Qx_t + u_t^{\top}Ru_t    \hspace{3mm}  \overset{a.s.}{\rightarrow} \hspace{3mm} \Tr(PW).$$

\end{theorem}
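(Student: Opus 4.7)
The plan is to sandwich the random quantity $\min_K J_T(K) := \min_K \tfrac{1}{T}\sum_{t=0}^{T-1}(x_t^\top Q x_t + u_t^\top R u_t)$ (with $u_t = -Kx_t$) between an upper and a lower bound that both converge almost surely to $\Tr(PW)$. The upper bound is immediate: plug in the infinite-horizon Kalman gain $K^\ast$ from Theorem~\ref{online-thm} as a feasible time-invariant linear controller, so $\min_K J_T(K) \le J_T(K^\ast)$. Because $A-BK^\ast$ is strictly stable and $w_t$ is i.i.d.\ and bounded, the closed-loop state is an ergodic Markov chain whose stationary cost is exactly $\Tr(PW)$, so either the scalar ergodic theorem or the concentration inequality of Lemma~\ref{concentration-lemma} gives $J_T(K^\ast) \overset{\mathrm{a.s.}}{\to} \Tr(PW)$, hence $\limsup_T \min_K J_T(K) \le \Tr(PW)$ almost surely.

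The matching lower bound $\liminf_T \min_K J_T(K) \ge \Tr(PW)$ a.s.\ is substantive and proceeds in two steps. First, I would localize: fix $\gamma>0$ small and $C>0$ large and show that, for almost every noise path, the minimizer $K^\ast_T := \argmin_K J_T(K)$ eventually lies in the compact strictly-stabilizing set $\mathcal{K}_{\gamma,C} := \{K : \rho(A-BK) \le 1-\gamma,\ \|K\| \le C\}$. Outside $\mathcal{K}_{\gamma,C}$ the closed-loop matrix either fails to decay geometrically or forces a large control penalty $x_t^\top K^\top RK x_t$, and combined with the persistent excitation supplied by the bounded i.i.d.\ noise this drives $J_T(K)$ above $\Tr(PW)+1$ for all large $T$, contradicting the upper bound and ruling out such $K$. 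Second, on the compact set $\mathcal{K}_{\gamma,C}$ I would establish a uniform LLN. Using $\|(A-BK)^j\| \le C'(1-\gamma)^j$ uniformly on $\mathcal{K}_{\gamma,C}$ together with the boundedness of $w_t$, both the random map $K \mapsto J_T(K)$ and the deterministic map $K \mapsto J_\infty(K) := \Tr((Q+K^\top R K)\Sigma(K))$ (where $\Sigma(K)$ solves the discrete Lyapunov equation $\Sigma = (A-BK)\Sigma(A-BK)^\top + W$) are uniformly Lipschitz in $K$ with deterministic constants. Pointwise a.s.\ convergence $J_T(K)\to J_\infty(K)$ from the scalar ergodic theorem on a finite $\epsilon$-net, combined with a union bound and the common Lipschitz control, upgrades to $\sup_{K\in\mathcal{K}_{\gamma,C}}|J_T(K)-J_\infty(K)| \overset{\mathrm{a.s.}}{\to} 0$. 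Since Kalman's theorem identifies $\Tr(PW) = \min_K J_\infty(K)$, combining localization with uniform convergence gives
\[
\min_K J_T(K) = J_T(K^\ast_T) \;\ge\; J_\infty(K^\ast_T) - o(1) \;\ge\; \Tr(PW) - o(1) \quad \text{a.s.},
\]
closing the sandwich.

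The main obstacle is the localization step. The stabilizing region is open and $J_\infty(K)$ blows up on its boundary, so continuity alone is insufficient; what is needed is a quantitative almost-sure lower bound ensuring $J_T(K) \gg \Tr(PW)$ whenever $K$ is near-unstable or large in norm. This must be derived directly from the persistent excitation of the bounded i.i.d.\ noise and is delicate because, as the paper emphasizes in equation~(\ref{state-eq}), $J_T(K)$ is a non-convex random polynomial of degree $\sim T$ in the entries of $K$. Once localization is secured, the uniform LLN via $\epsilon$-nets plus Lipschitz control is standard.
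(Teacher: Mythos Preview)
Your approach and the paper's share the same skeleton---show $J_T(K)\to J_\infty(K)$ almost surely for each stabilizing $K$ and then pass to the minimum over $K$---but they diverge on both halves. For the pointwise step the paper computes the bounded-difference constants of $w\mapsto\textsc{cost}_T(K;w_0,\ldots,w_{T-1})$ (they are $O(1/T)$ once $A-BK$ is stable) and applies McDiarmid's inequality to obtain the exponential concentration stated as Lemma~\ref{concentration-lemma}; you instead invoke ergodicity directly. For the passage to minima the paper simply asserts that smoothness of $\textsc{cost}_T$ and $\textsc{cost}$ in $K$ suffices, whereas you set up a sandwich and do the real work: localize the random minimizer to a compact set $\mathcal{K}_{\gamma,C}$ of uniformly stable gains, then run a uniform law of large numbers there via an $\epsilon$-net plus Lipschitz control.

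Your route is more careful on exactly the point the paper glosses over: pointwise a.s.\ convergence plus smoothness does not by itself yield convergence of minima over a non-compact parameter set, and the paper supplies neither the localization argument nor the uniformity you outline. What the paper's route buys is the explicit non-asymptotic bound of Lemma~\ref{concentration-lemma}, advertised as of independent interest; its constants are in fact uniform over $\mathcal{K}_{\gamma,C}$, so you could substitute it for the scalar ergodic theorem in your $\epsilon$-net step and extract a rate. Your identification of localization as the crux is accurate and is a step the paper does not address.
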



\begin{proof}

\noindent Let $K \in \mathbb{R}^{m \times n}$ be any matrix so that $A - BK$ is stable $(\rho(A - BK) < 1)$. Recall that this implies that there exists a matrix $L$ and a similarity transform $M$ so that $A - BK = MLM^{-1}$ and $\|L\| = 1 - \gamma$ where $\gamma \in (0, 1]$. 

Define the function $$\textsc{cost}_T(K; w_0 \ldots w_{T-1}) = \frac{1}{T} \sum_{t=0}^{T-1} x_t^{\top}(Q + K^{\top}RK)x_t  \hspace{3mm} \text{where} \hspace{3mm}  x_{t+1} = (A-BK)x_t + w_t.$$

\noindent This function measures the time-averaged LQR cost of the linear policy $u_t = -Kx_t$ on the instance $w_0 \ldots w_{T-1}$. Similarly, define the function
$$ \textsc{\textsc{cost}}(K) = \expect_w \left[ \frac{1}{T} \sum_{t=0}^{T-1} x_t^{\top}(Q + K^{\top}RK)x_t  \hspace{3mm} \text{where} \hspace{3mm}  x_{t+1} = (A-BK)x_t + w_t \right]. $$

\noindent This function measures the expected infinite-horizon cost of the linear policy represented by $K$. The key difference between $\textsc{cost}_T(K; w_0 \ldots w_{T-1})$ and $\textsc{cost}$ is that the former is the cost of the policy $K$ on a specific instance $w = (w_0, \ldots, w_{T-1})$, whereas the latter cost is not defined relative to any specific instance but is rather the expected cost of the policy $K$, averaged over all instances $w$.

Using equation (\ref{state-eq}), we can rewrite $\textsc{cost}_T$ as  
$$\textsc{cost}_T(K; w_0 \ldots w_{T-1}) = \frac{1}{T}\sum_{t=0}^{T-1} \left \| \sum_{s=0}^{t-1} (A - BK)^{t -1 - s} w_s \right\|^2_{Q + K^{\top}RK}.$$

\noindent We first show that $\textsc{cost}_T$ is a \textit{bounded differences function} when restricted to the set of $w$ such that $\|w_t\| \leq B$ for all $t \in [0 \ldots T-1]$. Formally, that means that the following: for every $i \in 0 \ldots T-1$ and all fixed $w_0, \ldots w_{i-1}, w_{i + 1}, \ldots w_{T-1}$, there exists some $c_i$ such that $$\Delta_i := \sup_{w_i, w_i'} \left[\textsc{cost}_T(K; w_0, \ldots  w_i, \ldots w_{T-1}) -   \textsc{cost}_T(K; w_0, \ldots  w_i', \ldots w_{T-1}) \right] \leq c_i.$$ 

\noindent Intuitively, this means that changing $w$ in any single coordinate cannot change the value of $f_T(K; \cdot)$ too much.  We bound $\Delta_i$ as follows:

\begin{eqnarray*}
\Delta_i &=&
\frac{1}{T}\sum_{t=i+1}^{T-1} \left \|(A - BK)^{t -1 - i} w_i + \sum_{s=0, s\neq i}^{t-1} (A - BK)^{t -1 - s} w_s \right \|^2_{Q + K^{\top}RK} \\
&& -\frac{1}{T}\sum_{t=i+1}^{T-1} \left \|(A - BK)^{t -1 - i} w_i' + \sum_{s=0, s\neq i}^{t-1} (A - BK)^{t -1 - s} w_s \right \|^2_{Q + K^{\top}RK} \\
&=& \frac{1}{T}\sum_{t=i+1}^{T-1} \left \|(A - BK)^{t -1 - i} w_i \right\|^2_{Q + K^{\top}RK} - \left \|(A - BK)^{t -1 - i} w_i' \right\|^2_{Q + K^{\top}RK} \\
&& + \frac{2}{T}\sum_{t=i+1}^{T-1} \left((A - BK)^{t -1 - i} (w_i - w_i') \right)^{\top}(Q + K^{\top}RK) \left( \sum_{s=0, s\neq i}^{t-1} (A - BK)^{t -1 - s} w_s \right)\\
&\leq& \frac{1}{T}\sum_{t=i+1}^{T-1}  \|QL^{t - 1 - i}Q^{-1}\|^2 \|w_i\|^2 \|Q + K^{\top}RK\| \\
&& + \frac{2}{T}\sum_{t=i+1}^{T-1} \|QL^{t-1-i}Q^{-1} \| \|w_i - w_i'\| \|Q + K^{\top}RK\| \left( \sum_{s=0, s\neq i}^{t-1} \|QL^{t-1-s}Q^{-1} \| \|w_s\| \right)\\
&\leq& 5B^2 \kappa^2(M) \sigma_{\text{max}}(Q + K^{\top}RK) \frac{1}{\gamma^2T},  \\
\end{eqnarray*}

\noindent where we used the boundedness of $w$, stability of $A - BK$, and the formula for the sum of a geometric series. Since the $w_i$ are assumed to be independent, we can immediately apply McDiarmid's Inequality ( \cite{mcdiarmid1989method}) to obtain:

\begin{lemma} \label{concentration-lemma}
For all $K \in \mathbb{R}^{n \times m} $ such that $A - BK$ is stable, the function $\textsc{cost}_T(K; w)$ obeys the following concentration inequality:

$$\Pr \left( \left|\textsc{cost}_T(K; w) - \expect_w[\textsc{cost}_T(K;w)] \right | \geq \epsilon \right) \leq 2\exp{\left(-\frac{2\epsilon^2 \gamma^4T }{25B^4 \kappa^4(M) \sigma^2_{\text{max}}(Q + K^{\top}RK)} \right)}.$$
\end{lemma}

\noindent Note that as $T$ tends to infinity, $\textsc{cost}_T$ becomes more and more sharply concentrated around its mean. This implies that for all stabilizing $K$, the r.v. $\textsc{cost}_T(K; w_0 \ldots w_{T-1})$ converges pointwise to the expected infinite-horizon cost under the linear policy represented by $K$:

$$\textsc{cost}_T(K; w_0 \ldots w_{T-1})  \hspace{3mm} \overset{a.s.}{\rightarrow}  \hspace{3mm}\textsc{cost}(K).$$

\noindent Since $\textsc{cost}_T$ and $\textsc{cost}$ are both smooth functions of $K$, this implies that 

$$\min_K \textsc{cost}_T(K; w_0 \ldots w_{T-1})  \hspace{3mm} \overset{a.s.}{\rightarrow} \hspace{3mm}\min_{K}   \textsc{cost}(K).$$

\end{proof}

\section{Policy regret bounds} \label{policy-regret-sec}

The computation of the pairwise policy regrets between the three policies we consider follows immediately from Theorems \ref{online-thm}, \ref{offline-cost-thm}, and \ref{offline-linear-cost-thm}:

\begin{theorem}
As $T \rightarrow \infty$, the pairwise policy regrets between the optimal online policy, the optimal offline linear policy, and the optimal offline policy exhibit the following behavior: 
\begin{enumerate}
    \item The time-averaged policy regret between the optimal online policy and the offline optimal policy and the time-averaged policy regret between the optimal offline linear policy and the offline optimal policy both converge to $$ \Tr(W(P-S)) +  \sum_{i=0}^{\infty} \Tr \left(W  SA(A^{\top} - K^{\top}B^{\top})^i B (R + B^{\top} PB)^{-1} B^{\top} (A - BK)^i  A^{\top}S \right). $$
    
    \item The time-averaged policy regret between the optimal online policy and the offline optimal linear policy converges to zero.
    
\end{enumerate}

\end{theorem}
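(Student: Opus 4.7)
The plan is straightforward: since time-averaged policy regret is by definition the difference of the time-averaged infinite-horizon costs of two policies, each of the three pairwise regrets can be read off directly from Theorems \ref{online-thm}, \ref{offline-cost-thm}, and \ref{offline-linear-cost-thm}. I do not anticipate needing any new ingredients beyond these three results together with Lemma \ref{concentration-lemma}.

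First I would compute the regret of the optimal online policy against the optimal offline policy. Kalman's Theorem \ref{online-thm} gives the online cost as $\Tr(PW)$, and Theorem \ref{offline-cost-thm} provides the offline cost $\Tr(WS) - \sum_{i \geq 0} \Tr\!\left(W SA(A^{\top} - K^{\top}B^{\top})^i B (R + B^{\top} PB)^{-1} B^{\top} (A - BK)^i A^{\top}S\right)$. Subtracting and using $\Tr(PW) - \Tr(WS) = \Tr(W(P - S))$ immediately yields the closed-form expression stated in item 1 of the theorem.

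Next, Theorem \ref{offline-linear-cost-thm} asserts that the time-averaged cost of the optimal offline linear policy converges almost surely to $\Tr(PW)$, i.e. exactly the optimal online cost. Subtracting this from the optimal offline cost computed in Theorem \ref{offline-cost-thm} reproduces the same expression as in the first case, which completes item 1. Item 2 is then immediate: the regret between the optimal online policy and the optimal offline linear policy equals $\Tr(PW) - \Tr(PW) = 0$.

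The only nontrivial bookkeeping is to ensure that the almost-sure limit in Theorem \ref{offline-linear-cost-thm} can be promoted to a statement about the expected time-averaged cost, since policy regret is defined via expectations. I expect to handle this using the concentration bound of Lemma \ref{concentration-lemma} together with the uniform boundedness of the noise: these imply that $\textsc{cost}_T(K^*;w)$ is uniformly integrable, so dominated convergence upgrades the almost-sure limit to convergence in expectation. Beyond this minor step, the theorem is purely an arithmetic consequence of the three prior results, and I foresee no significant obstacle.
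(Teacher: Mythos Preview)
Your proposal is correct and matches the paper's own argument, which simply states that the theorem follows immediately from Theorems \ref{online-thm}, \ref{offline-cost-thm}, and \ref{offline-linear-cost-thm} by subtracting the respective infinite-horizon costs. Your additional remark about promoting the almost-sure convergence of Theorem \ref{offline-linear-cost-thm} to convergence in expectation is a valid (and arguably necessary) piece of bookkeeping that the paper does not make explicit.
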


\noindent We note that the first part of this theorem also gives a lower bound on the policy regret between the optimal online policy and the optimal offline policy in the setting where the noise is adversarial, since clearly

\begin{eqnarray*}
\min_{u \in \Pi} \expect_{w \sim \mathcal{D}} \left[ \frac{1}{T} \sum_{t=0}^{T-1} x_t^{\top}Qx_t + u_t^{\top}Ru_t  - \min_u  \frac{1}{T} \sum_{t=0}^{T-1}  x_t^{\top}Qx_t + u_t^{\top}Ru_t \right] \\
\leq \hspace{2mm} \min_{u \in \Pi} \sup_{w \in \Lambda} \left[ \frac{1}{T} \sum_{t=0}^{T-1} x_t^{\top}Qx_t + u_t^{\top}Ru_t  - \min_u  \frac{1}{T} \sum_{t=0}^{T-1} x_t^{\top}Qx_t + u_t^{\top}Ru_t \right],
\end{eqnarray*}
where  $\Pi$ is the class of causal policies and  $\Lambda$ is any class of bounded disturbances.

\newpage 

\bibliography{references.bib}



\end{document}